\newcolumntype{Y}{>{\centering\arraybackslash}X}
\newcommand{\tabsymbol}[1]{%
  \multicolumn{1}{c@{\makebox[0pt]{#1}}}{}%
}
\numberwithin{equation}{section}
\title{Fano hypersurfaces with no finite order birational automorphisms}
\author{Nathan Chen, Lena Ji, and David Stapleton}
\definecolor{mycolor}{RGB}{146, 214, 203}
\definecolor{myothercolor}{RGB}{179, 215, 232}
\newtheorem{theorem}{Theorem}
\newtheorem{proposition}[theorem]{Proposition}
\numberwithin{theorem}{section}
\newtheorem{Lthm}{Theorem}
\newtheorem{Lcor}[Lthm]{Corollary}
\newcommand{\hr}[2]{\hyperref[#1]{#2}}
\theoremstyle{definition}
\newtheorem{remark}[theorem]{Remark}
\newtheorem{construction}[theorem]{Construction}
\newtheorem{question}[theorem]{Question}
\newtheorem*{question*}{Question}
\newtheorem{example}[theorem]{Example}
\newtheorem{definition}[theorem]{Definition}
\def\ZZ{{\mathbb Z}}
\def\CC{{\mathbb C}}
\def\PP{{\mathbb{P}}}
\def\Pic{{\mathrm{Pic}}}
\def\Spec{{\mathrm{Spec} \ }}
\def\Bir{{\mathrm{Bir}}}
\def\Cr{{\mathrm{Cr}}}
\def\ra{{\rightarrow}}
\def\etabar{{\overline{\eta}}}
\def\Frac{{\mathrm{Frac} \ }}
\def\PGL{{\mathrm{PGL}}}
\def\Aut{{\mathrm{Aut}}}
\def\sp{{\mathrm{sp}}}
\DeclareMathOperator{\HH}{H}
\newcommand*{\sheafTor}{\mathcal{T}\kern -.5pt or}
\newcommand*{\Extc}{\mathcal{E}\kern -.5pt xt}
\newcommand{\BirSpecialization}{\Xi_\eta(Z)}
\pgfplotsset{compat=1.15}
\definecolor{graycolor}{rgb}{0.66,0.66,0.66}
\newcommand{\diam}{\diamond}
\thanks{During the preparation of this article, the first author was partially supported by an NSF postdoctoral fellowship DMS-2103099, the second author was partially supported by NSF grant DMS-1840234, and the third author was partially supported by NSF grant DMS-1952399.}
\subjclass[2020]{Primary: 14E07. Secondary: 14D06, 14J70.}
\begin{document}
\maketitle

\thispagestyle{empty}

\begin{abstract}
We use the specialization homomorphism for the birational automorphism group to study finite order birational automorphisms. For a family of varieties over a DVR, we prove that a birational automorphism of order coprime to the residue characteristic cannot specialize to the identity. As an application, we show that very general $n$-dimensional hypersurfaces of degree $d \geq 5 \lceil(n+3)/6 \rceil$ have no finite order birational automorphisms.
\end{abstract}

The birational automorphism group of a variety $X$---denoted $\Bir(X)$---is one of the most natural birational invariants associated to $X$. For $X = \PP^n_{\CC}$, the \textit{Cremona group} $\Cr_n(\CC) = \Bir(\PP^n_{\CC})$ is an object of classical and modern interest, and it is extremely interesting and complicated when $n\ge 2$. Beyond the case of projective space, it is natural to study the birational automorphism group of a smooth degree $d$ hypersurface $X\subset \PP^{n+1}_{\CC}$. In the general type case, if $n \geq 2$ and $d\ge n+3$, then $K_{X}$ is ample and Matsumura \cite{Matsumura63} showed that $\Bir(X)$ is equal to the automorphism group $\Aut(X)$. If $d=n+2$, in which case $X$ is Calabi--Yau (with Picard rank \(1\) if \(n \geq 3\)), then again \(\Bir(X)=\Aut(X)\) \cite{MatsusakaMumford64} (see also \cite[Lem A.1]{LS22}). However, if $d\le n+1$, in which case $X$ is Fano, very little is known about birational automorphisms in general once $n\ge 4$.

The most striking known result is the case of degree $d=n+1$ Fano hypersurfaces. To briefly summarize, there has been a great deal of work by many authors---including Fano, Segre, Iskovskikh, Manin, Pukhlikov, Corti, Cheltsov, de Fernex, Ein, Musta\c{t}\u{a}, and Zhuang---to show that if $n\ge 3$ and $d=n+1$, then any such smooth $X$ is birationally superrigid. As a consequence of their work, $\Bir(X) = \Aut(X)$ (see \cite{Kollar19} for a survey of the main ideas that were developed over time). In the case $d=n$, Pukhlikov used similar techniques to show that such hypersurfaces also satisfy $\Bir(X) = \Aut(X)$ once $n\ge 14$ \cite[Cor. 1]{Pukhlikov-index2}. For a smooth hypersurface $X$, having $\Bir(X) = \Aut(X)$ places strong constraints on the groups, as shown by Matsumura and Monsky \cite[Thm. 2 and Thm. 5]{MM63}: (1) if $n \geq 2$ and $d \geq 3$ (excluding the case $(n, d) = (2, 4)$), then $\Aut(X)$ is naturally identified with a finite subgroup of $\Aut(\PP^{n+1}_{\CC}) = \PGL_{n+2}(\CC)$, and (2) if $n \geq 2$, $d \geq 3$, and $X$ is very general, then $\Aut(X)$ is trivial. There seem to be few known restrictions on \(\Bir\) when $d<n$.

We first prove a result about specializing finite order elements in the birational automorphism group (Proposition~\ref{prop:specialization-order-l}). For a family of varieties over a complex curve, this shows that a nontrivial finite order birational automorphism cannot specialize to the identity on the central fiber. We apply our result to hypersurfaces, but we believe that this specialization method will also be useful for studying the birational automorphism groups of other varieties.

By degenerating to a reducible hypersurface, our result will imply that if a very general non-ruled degree \(d\) hypersurface has no \(p\)-torsion in its birational automorphism group, then the same also holds in degree \(d+1\). By degenerating to positive characteristic---following the work of Koll\'{a}r \cite{Koll'ar-hypersurfaces}---we can control the torsion in $\Bir(X)$ for certain hypersurfaces in the Fano range.

\begin{Lthm}\label{thm:torsion-p^e}
Let \(p\) be a prime and let \(n\) and \(d\) be integers; if $p=2$ further assume that $n$ is even. Let \(X\subset\mathbb P^{n+1}_{\mathbb C}\) be a very general hypersurface. If $d \geq p\left\lceil\frac{n+3}{p+1}\right\rceil$, then any finite order element in \(\Bir(X)\) has order \(p^r\) for some \(r\).
\end{Lthm}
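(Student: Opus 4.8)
The plan is to reduce to prime-order elements and then propagate the conclusion upward in the degree. Since the finite-order elements of $\Bir(X)$ are closed under taking powers, the theorem is equivalent to the assertion that, for every prime $\ell\neq p$, a very general hypersurface $X\subset\PP^{n+1}_{\CC}$ of degree $d\geq d_0:=p\lceil\tfrac{n+3}{p+1}\rceil$ (with $n$ even when $p=2$) has no birational automorphism of order $\ell$: a finite-order $g$ whose order is divisible by such an $\ell$ produces the order-$\ell$ element $g^{\ord(g)/\ell}$. I would fix $\ell\neq p$ and induct on $d\geq d_0$. Throughout, the degree bound $d_0$ meets the relevant non-ruledness thresholds of Koll\'ar \cite{Koll'ar-hypersurfaces}, so that very general hypersurfaces of degree $\geq d_0$ in $\PP^{n+1}_{\CC}$, as well as their very general hyperplane sections, are non-ruled and in particular non-rational.

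For the inductive step from $d$ to $d+1$, I would degenerate a very general degree-$(d+1)$ hypersurface to a reducible one. Writing $X=\{F=0\}$ and taking $G$ very general of degree $d$, form $\Xc=\{tF+x_0G=0\}\subset\PP^{n+1}_{\CC[[t]]}$, so that the special fiber is $\Xc_0=H\cup X_0'$ with $H=\{x_0=0\}\cong\PP^n$, $X_0'=\{G=0\}$ a very general degree-$d$ hypersurface, and $X_0'\cap H=D$ a very general degree-$d$ hypersurface in $\PP^n$. If $g\in\Bir(X)$ has order $\ell$, then, the residue characteristic being $0$, the image $\bar g$ of $g$ under the specialization homomorphism satisfies $\bar g^{\ell}=\id$ and, by Proposition~\ref{prop:specialization-order-l}, $\bar g\neq\id$. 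As $X_0'$ is not rational, $\bar g$ cannot interchange $X_0'$ and $H$, so it restricts to $g_1\in\Bir(X_0')$ and $g_2\in\Cr_n(\CC)$, of order dividing $\ell$ and agreeing along $D$; the inductive hypothesis gives $g_1=\id$, so $g_2$ fixes $D$ pointwise. I would then conclude with the lemma that \emph{a nontrivial finite-order element of $\Cr_n(\CC)$ cannot fix a very general hypersurface of degree $\geq d_0$ in $\PP^n$ pointwise}, which forces $g_2=\id$, hence $\bar g=\id$, a contradiction. To prove this lemma I would resolve indeterminacies so that such a $\psi$ acts biregularly, of finite order, on a smooth $X'$ birational to $\PP^n$, fixing a divisor $D'\cong D$; if $\psi\neq\id$, then near $D'$ it acts by a primitive root of unity of order $\ord(\psi)$ in the normal direction, so $X'\to X'/\langle\psi\rangle$ is a cyclic cover with rational total space, totally ramified along the image of $D'$. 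Thus $D$ would be a branch component of such a cover, and one checks that the hypersurfaces so arising — over all finite-order $\psi$ and all equivariant models — lie in a countable union of proper subvarieties of $|\Oc_{\PP^n}(d)|$, avoided by a very general $D$.

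For the base case $d=d_0$, I would spread out $X$ and any hypothetical order-$\ell$ automorphism over a finitely generated $\ZZ$-algebra and specialize at a place over $p$, obtaining a very general degree-$d_0$ hypersurface $X_p\subset\PP^{n+1}_{\Fbar}$; since $\gcd(\ell,p)=1$, Proposition~\ref{prop:specialization-order-l} carries an order-$\ell$ element of $\Bir(X)$ to one of $\Bir(X_p)$, so it suffices to show $\Bir(X_p)$ has no order-$\ell$ element. The exponent $d_0=p\lceil\tfrac{n+3}{p+1}\rceil$ is exactly the degree at which Koll\'ar's characteristic-$p$ construction \cite{Koll'ar-hypersurfaces} applies: a degree-$d_0$ hypersurface in $\PP^{n+1}_{\Fbar}$ degenerates to one, $W$, whose resolution carries Koll\'ar's global differential forms, built from the purely inseparable degree-$p$ geometry of $W$ (this is where $n$ even is used when $p=2$). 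Degenerating $X_p$ to $W$ over a DVR of residue characteristic $p$ and applying Proposition~\ref{prop:specialization-order-l} once more, I would reduce to: $W$ has no birational self-map of order prime to $p$. This should follow because those differential forms are canonically attached to the inseparable structure of $W$, hence respected by $\Bir(W)$, while an element of order $\ell\neq p$, acting semisimply, cannot preserve that structure unless it is trivial.

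The hard part will be this last step — converting Koll\'ar's differential-form computation, which on its face only obstructs ruledness, into the nonexistence of birational self-maps of $W$ of order coprime to $p$. This demands a precise description of the canonical data that Koll\'ar's forms pin down on $W$ and a check that it has no order-$\ell$ symmetry. A secondary obstacle is the Cremona lemma for $n\geq 4$, where no classification of finite subgroups of $\Cr_n$ is available and one must instead combine boundedness of such subgroups with an $n$-dependent bound on the branch degree of a rational cyclic cover; one also has to verify the numerical inequalities ensuring that $d_0$ clears each Koll\'ar threshold invoked.
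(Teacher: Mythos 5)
Your overall architecture (specialize to kill prime-to-$p$ torsion, degenerate to characteristic $p$ for the base degree, induct on the degree via a reducible degeneration) matches the paper's, but the proof does not close at the one place where the real content lies. In the base case you reduce to the claim that Koll\'ar's characteristic-$p$ variety $W$ has no birational self-maps of order prime to $p$, and you candidly label the derivation of this from Koll\'ar's differential forms as ``the hard part.'' That step is not a formal consequence of the forms obstructing ruledness: it is a separate theorem, \cite[Cor.~C]{ChenStapleton-finite-BirX}, asserting that the relevant $p$-cyclic covers in characteristic $p$ have \emph{trivial} birational automorphism group, and the paper invokes it as a black box. Without that input your base case is an unproven assertion. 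There is also a structural issue with how you reach characteristic $p$: you degenerate the hypersurface itself over a mixed-characteristic DVR, but to apply the specialization homomorphism to all of $\Bir$ you must verify that this family has sustained \emph{separably uniruled} modifications, which is delicate in residue characteristic $p$ (no resolution of singularities in general). The paper sidesteps this by interposing Mori's construction in equicharacteristic $0$ --- degenerating the degree-$pe$ hypersurface to a $p$-cyclic cover of a degree-$e$ hypersurface over $\CC$, and separately degenerating that cyclic cover to characteristic $p$ --- because these two specific families are exactly the ones for which \cite[Thm.~C \& Ex.~1.7]{ChenStapleton-rational-endomorphisms} establish the sustained-modifications hypotheses.

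Your inductive step also contains a gap, though a repairable one. You treat the specialization $\bar g$ as a birational self-map of the reducible fiber $H\cup X_0'$ and are then forced to dispose of a finite-order element $g_2\in\Cr_n(\CC)$ fixing $D$ pointwise; the lemma you invoke for this is nontrivial (equivariant regularization, structure of the fixed divisor, a countability/dimension count over all models) and is only sketched. This detour is unnecessary: the specialization homomorphism of Proposition~\ref{prop:specialization-hom}\eqref{item:Bir-specializes-nonruled}--\eqref{item:specialization-sustained} lands directly in $\Bir$ of the \emph{unique non-ruled component} $Z=X_0'$ (the hyperplane component is ruled and is discarded by the construction), and Proposition~\ref{prop:specialization-order-l}\eqref{item:mu_l-nontrivial-on-special-fiber} gives that the image has order exactly $\ell$ there --- contradicting the inductive hypothesis immediately, with no Cremona lemma needed. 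The only technical point you omit here is the one the paper does address: the total space of the pencil is singular along the base locus, and one needs a small resolution making the central fiber reduced snc so that the family has sustained ruled modifications before the specialization machinery applies.
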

\noindent When $d\ge n+2$, Theorem~\ref{thm:torsion-p^e} is well known as $K_X$ is ample or trivial. When $d=n+1$, or when \(d=n\) and \(n\geq 14\), we use the known results on index one and two Fano hypersurfaces. So our contribution to Theorem~\ref{thm:torsion-p^e} is for Fano hypersurfaces of degree $d\le n$.

We achieve the largest range of degrees in which we can apply Theorem~\ref{thm:torsion-p^e} by choosing the smallest primes. This gives the following corollary.

\begin{Lcor}\label{cor:no-torsion}
Let \(X\subset\mathbb P^{n+1}_{\mathbb C}\) be a very general degree \(d\) hypersurface. If either
\begin{enumerate} \item \(d\geq 3\lceil\frac{n+3}{4}\rceil\) and \(n\) is even, or
\item \(d\geq 5\lceil\frac{n+3}{6}\rceil\) and \(n\) is odd,
\end{enumerate}
Then \(\Bir (X)\) has no elements of finite order.
\end{Lcor}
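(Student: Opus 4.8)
The plan is to apply Theorem~\ref{thm:torsion-p^e} twice, for two distinct primes $p$ and $q$, and to observe that the only positive integer which is simultaneously a power of $p$ and a power of $q$ is $1$. Concretely, if $\gamma\in\Bir(X)$ has finite order and both applications of the theorem are legitimate, then $\ord(\gamma)=p^r=q^s$ for some integers $r,s\geq 0$, so $r=s=0$ and $\gamma=\id$. Hence in each case it suffices to exhibit two primes for which the degree hypothesis of Theorem~\ref{thm:torsion-p^e} is satisfied.

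For case (1), with $n$ even, I would take $p=2$ and $q=3$. The hypothesis $d\geq 3\lceil\tfrac{n+3}{4}\rceil$ is exactly the condition needed to apply Theorem~\ref{thm:torsion-p^e} with $q=3$, and since $n$ is even the theorem with $p=2$ applies as soon as $d\geq 2\lceil\tfrac{n+3}{3}\rceil$. So the only point to check is the elementary inequality $3\lceil\tfrac{n+3}{4}\rceil\geq 2\lceil\tfrac{n+3}{3}\rceil$. Writing $m=n+3$, the crude bounds $3\lceil m/4\rceil\geq \tfrac34 m$ and $2\lceil m/3\rceil\leq \tfrac23 m+\tfrac43$ already give the inequality for all $m\geq 16$, and the finitely many remaining (odd) values are checked directly. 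Thus $\Bir(X)$ is torsion-free.

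For case (2), with $n$ odd, the prime $2$ is unavailable, so I would instead take $p=3$ and $q=5$. Here $d\geq 5\lceil\tfrac{n+3}{6}\rceil$ is precisely the condition to apply Theorem~\ref{thm:torsion-p^e} with $q=5$, while $p=3$ requires $d\geq 3\lceil\tfrac{n+3}{4}\rceil$, so one needs $5\lceil\tfrac{n+3}{6}\rceil\geq 3\lceil\tfrac{n+3}{4}\rceil$. The same type of estimate gives this for $n+3$ large (it works once $n+3\geq 27$), and a short check disposes of the smaller even values of $n+3$, with the single exception $n=3$, where the hypothesis only yields $d\geq 5$. But for $n=3$ the condition $d\geq 5=n+2$ forces $X$ to be Calabi--Yau (if $d=5$) or of general type (if $d>5$), with $\Pic(X)$ of rank $1$; then $\Bir(X)=\Aut(X)$ by \cite{MatsusakaMumford64} (respectively \cite{Matsumura63}), and $\Aut(X)$ is trivial for very general $X$ by \cite{MM63}, so $\Bir(X)$ is again torsion-free.

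I do not expect a real obstacle once Theorem~\ref{thm:torsion-p^e} is in hand: the argument is formal apart from two bookkeeping comparisons of ceiling functions. The only slightly delicate point is keeping track of the small-dimensional boundary cases---in particular $n=3$ in case (2), and more generally any $(n,d)$ with $d\geq n+2$, where one is outside the Fano range---which must instead be handled via the classical results for hypersurfaces with nef canonical bundle.
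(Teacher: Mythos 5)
Your proof is correct and follows the same strategy as the paper, whose entire proof reads ``Combine the results for the primes $p=2,3$ in Theorem~\ref{thm:torsion-p^e} if $n$ is even, and consider the primes $p=3,5$ if $n$ is odd.'' You have simply made explicit the ceiling-function comparisons the paper leaves implicit, and your careful treatment of the boundary case $n=3$, $d=5$ (where the $p=3$ hypothesis $d\geq 6$ fails and one must fall back on the Calabi--Yau result $\Bir(X)=\Aut(X)=\{1\}$) is a genuine point that the paper's one-line proof glosses over.
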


The table below places our results in the context of previous work for some values of $(n, d)$. The number \(2\) means that any finite order element in \(\Bir(X)\) has order a power of \(2\) (possibly order \(1\)) as a result of Theorem~\ref{thm:torsion-p^e}, and similarly for $3$.

\begin{table}[ht]
\tiny
\renewcommand{\arraystretch}{0.7}
\begin{tabularx}{15cm}{c|YYYYYYYYYYYYYYYYYYYYYY}
\tabsymbol{\small \emph{d}} & & & & & & & & & & & & & & & & & & & & & & \\

\textbf{21} & $\cdot$ & $\cdot$ & $\cdot$ & $\cdot$ & $\cdot$ & $\cdot$ & $\cdot$ & $\cdot$ & $\cdot$ & $\cdot$ & $\cdot$ & $\cdot$ & $\cdot$ & $\cdot$ & $\cdot$ & $\cdot$ & $\cdot$      & $\diam$ & $\ast$ & $\blacksquare$ & 3 & $\blacksquare$ \\

\textbf{20} & $\cdot$ & $\cdot$ & $\cdot$ & $\cdot$ & $\cdot$ & $\cdot$ & $\cdot$ & $\cdot$ & $\cdot$ & $\cdot$ & $\cdot$ & $\cdot$ & $\cdot$ & $\cdot$ & $\cdot$ & $\cdot$      & $\diam$ & $\ast$  & $\blacksquare$      & 2 &   & 2 \\

\textbf{19} & $\cdot$ & $\cdot$ & $\cdot$ & $\cdot$ & $\cdot$ & $\cdot$ & $\cdot$ & $\cdot$ & $\cdot$ & $\cdot$ & $\cdot$ & $\cdot$ & $\cdot$ & $\cdot$ & $\cdot$      & $\diam$ & $\ast$  & $\blacksquare$       & 3      & 2 &   & 2 \\

\textbf{18} & $\cdot$ & $\cdot$ & $\cdot$ & $\cdot$ & $\cdot$ & $\cdot$ & $\cdot$ & $\cdot$ & $\cdot$ & $\cdot$ & $\cdot$ & $\cdot$ & $\cdot$ & $\cdot$      & $\diam$ & $\ast$  & 3       & $\blacksquare$       & 3      & 2 &   & 2 \\

\textbf{17} & $\cdot$ & $\cdot$ & $\cdot$ & $\cdot$ & $\cdot$ & $\cdot$ & $\cdot$ & $\cdot$ & $\cdot$ & $\cdot$ & $\cdot$ & $\cdot$ & $\cdot$      & $\diam$ & $\ast$  & 2       &         & 2       &        &   &   & \\

\textbf{16} & $\cdot$ & $\cdot$ & $\cdot$ & $\cdot$ & $\cdot$ & $\cdot$ & $\cdot$ & $\cdot$ & $\cdot$ & $\cdot$ & $\cdot$ & $\cdot$      & $\diam$ & $\ast$  & 3       & 2       &         & 2       &        &   &   & \\

\textbf{15} & $\cdot$ & $\cdot$ & $\cdot$ & $\cdot$ & $\cdot$ & $\cdot$ & $\cdot$ & $\cdot$ & $\cdot$ & $\cdot$ & $\cdot$      & $\diam$ & $\ast$  & $\blacksquare$       & 3       & 2       &         &         &        &   &   & \\

\textbf{14} & $\cdot$ & $\cdot$ & $\cdot$ & $\cdot$ & $\cdot$ & $\cdot$ & $\cdot$ & $\cdot$ & $\cdot$ & $\cdot$      & $\diam$ & $\ast$  &         & 2       &         & 2       &         &         &        &   &   & \\

\textbf{13} & $\cdot$ & $\cdot$ & $\cdot$ & $\cdot$ & $\cdot$ & $\cdot$ & $\cdot$ & $\cdot$ & $\cdot$      & $\diam$ & 3       & 2       &         &         &         &         &         &         &        &   &   & \\

\textbf{12} & $\cdot$ & $\cdot$ & $\cdot$ & $\cdot$ & $\cdot$ & $\cdot$ & $\cdot$ & $\cdot$      & $\diam$ & $\blacksquare$       & 3       & 2       &         &         &         &         &         &         &        &   &   & \\

\textbf{11} & $\cdot$ & $\cdot$ & $\cdot$ & $\cdot$ & $\cdot$ & $\cdot$ & $\cdot$      & $\diam$ &         & 2       &         &         &         &         &         &         &         &         &        &   &   & \\

\textbf{10} & $\cdot$ & $\cdot$ & $\cdot$ & $\cdot$ & $\cdot$ & $\cdot$      & $\diam$ & 2       &         & 2       &         &         &         &         &         &         &         &         &        &   &   & \\

\textbf{9}  & $\cdot$ & $\cdot$ & $\cdot$ & $\cdot$ & $\cdot$      & $\diam$ & 3       &         &         &         &         &         &         &         &         &         &         &         &        &   &   & \\

\textbf{8}  & $\cdot$ & $\cdot$ & $\cdot$ & $\cdot$      & $\diam$ & 2       &         &         &         &         &         &         &         &         &         &         &         &         &        &   &   & \\

\textbf{7}  & $\cdot$ & $\cdot$ & $\cdot$      & $\diam$ &         &         &         &         &         &         &         &         &         &         &         &         &         &         &        &   &   & \\

\textbf{6}  & $\cdot$ & $\cdot$      & $\diam$ & 2       &         &         &         &         &         &         &         &         &         &         &         &         &         &         &        &   &   & \\

\textbf{5}  & $\cdot$      & $\diam$ &         &         &         &         &         &         &         &         &         &         &         &         &         &         &         &         &        &   &   & \\

\textbf{4}  & $\diam$ &         &         &         &         &         &         &         &         &         &         &         &         &         &         &         &         &         &        &   &   & \smash{\raisebox{-4pt}{\rlap{\qquad \small \emph{n}}}} \\
\hline
\rule{0pt}{\normalbaselineskip} & \textbf{3} & \textbf{4} & \textbf{5} & \textbf{6} & \textbf{7} & \textbf{8} & \textbf{9} & \textbf{10} & \textbf{11} & \textbf{12} & \textbf{13} & \textbf{14} & \textbf{15} & \textbf{16} & \textbf{17} & \textbf{18} & \textbf{19} & \textbf{20} & \textbf{21} & \textbf{22} & \textbf{23} & \textbf{24}
\end{tabularx}
\caption*{$\blacksquare$ = Corollary~\ref{cor:no-torsion} \quad $\cdot$ = \text{non-Fano} \quad $\diam$ = \text{Fano index } 1 \quad $\ast$ = \text{Pukhlikov's Fano index 2 results}}
\end{table}

\noindent Restricting the possible orders of torsion elements places strong restrictions on the birational automorphism group. Since the Cremona group contains $p$-torsion for any $p$, Theorem~\ref{thm:torsion-p^e} with \(p=2\) if \(n\) is even and \(p=3\) if \(n\) is odd implies that \(\Bir(X)\not\cong\Cr_n(\CC)\) if \(d\geq 2\lceil\frac{n+3}{3}\rceil\) for \(n\) even and \(d\geq 3\lceil\frac{n+3}{4}\rceil\) for \(n\) odd. Remarkably, Cantat proved $\Bir(X)\not\cong\Cr_n(\CC)$ whenever $X$ is \emph{any} irrational variety \cite[Thm.~C]{Cantat14}.

\begin{remark}
The parity assumption on $n$ in Theorem~\ref{thm:torsion-p^e} comes from studying the singularities of odd dimensional double covers of hypersurfaces in characteristic \(2\) \cite[Thm.~C]{ChenStapleton-rational-endomorphisms}. At the moment, we cannot give an explicit resolution in this case.
\end{remark}

In light of Corollary~\ref{cor:no-torsion}, which shows that \(\Bir(X)\) contains no finite order elements, one might wonder how far apart this is from showing that \(\Bir(X)=\{1\}\). (Recall that \(\Aut(X)=\{1\}\) for these hypersurfaces.) There are a number of related works in this vein.
Any finite order element of \(\Bir(X)\) is regularizable, i.e. it is equivalent to a regular automorphism on a birational model of \(X\). For surfaces and for birationally rigid Fano threefolds, the regularizable automorphisms generate the birational automorphism group (e.g. \(\Cr_2(\CC)\) is generated by \(\Aut(\PP^2)=\PGL_3(\CC)\) and the Cremona involution). Cheltsov has asked whether this holds in general \cite[Conj.~1.12]{Cheltsov04}. Recently, Lin and Shinder \cite{LS22} proved that this is false by showing that for \(n\geq 3\), \(\Cr_n(\CC)\) is not generated by (pseudo-)regularizable elements.

Throughout the paper we consider $\Bir$ as a group, not as a group scheme. However, for non-uniruled varieties Hanamura has several results on giving $\Bir$ a scheme structure \cite{Hanamura87, Hanamura88}.

\noindent\textbf{Notation.} $R$ will denote a DVR with field of fractions $K=\Frac R$ and residue field $k$. We will write \(\eta\) for the generic point  of $\Spec R$ and \(0\) for the closed point.

\noindent\textbf{Outline.}
Let $X$ be a family over $R$ and let $Z \subset X_{0}$ be a component of the special fiber. In $\S 1$, we first identify a subgroup $\Xi_{\eta}(Z) \subset \Bir_K(X_{\eta})$, consisting of the birational automorphisms of the generic fiber $X_{\eta}$ that "specialize". We construct a specialization homomorphism
\[ \sp_\eta \colon  \Xi_{\eta}(Z) \rightarrow \Bir_{k}(Z). \]
Next, we study torsion in the birational automorphism group in $\S 2$ and show that if $\ell$ is a positive integer that is invertible in $R$, then the kernel of the specialization map cannot contain birational automorphisms $\phi \in \Xi_{\eta}(Z)$ of order $\ell$ (see Proposition~\ref{prop:specialization-order-l}\eqref{item:mu_l-nontrivial-on-special-fiber}). In $\S 3$ we degenerate to characteristic $p > 0$ and take advantage of some nice properties that are satisfied by the special fiber $X_{0}$ to show that $\Xi_{\eta}(Z)$ coincides with $\Bir(X_{\eta})$. In particular, we use the fact (building on work of Koll\'{a}r \cite{Koll'ar-hypersurfaces} and of the first and third authors \cite{ChenStapleton-finite-BirX}) that certain $p$-cyclic covers in characteristic $p$ have no birational automorphisms. This is finally applied to families of hypersurfaces to prove Theorem~\ref{thm:torsion-p^e} and Corollary~\ref{cor:no-torsion}.

\noindent\textbf{Acknowledgements.}
We are grateful to J\'er\'emy Blanc, Michel Brion, Serge Cantat, J\'anos Koll\'ar, Davesh Maulik, Aleksandr Pukhlikov, Evgeny Shinder, Burt Totaro, Ziquan Zhuang, and Susanna Zimmermann for helpful conversations. The first author would like to thank Professor Pietro Pirola and the University of Pavia for the opportunity to visit and their warm hospitality, during which parts of this paper were drafted.

\section{The specialization homomorphism for \texorpdfstring{$\Bir$}{\texttwoinferior}}\label{sec:Specialization}

The specialization homomorphism was first defined by Matsusaka and Mumford (who attribute it to Artin) \cite{MatsusakaMumford64}, and it has also appeared in the literature for surfaces \cite[\S 3.1]{Persson77} \cite[\S 2]{LieblichMaulik18}. To our knowledge, it has not previously been applied to systematically study birational automorphisms.

\begin{definition}[{\cite[Thm I]{MatsusakaMumford64}}]\label{defn:specializes}
Let \(X_R\) be an integral flat separated scheme over \(R\), and let \(Z\subset X_0\) be a reduced irreducible component that appears with multiplicity one in the central fiber. Let \(\phi\in\Bir_K(X_\eta)\) be a birational automorphism, and let \(\Gamma\subset X_R\times_R X_R\) be the closure of the graph of \(\phi\).
We say \(\phi\) \emph{specializes to \(Z\)} if the special fiber \(\Gamma_0\) has a unique component that maps birationally to \(Z\) under both projections.
\end{definition}

\begin{example}
In the ruled setting, a birational automorphism of \(X_\eta\) need not specialize.
For the automorphism \(x\mapsto\tfrac{t}{x}\) on the generic fiber of the constant family \(\PP^1_x\times\mathbb A^1_t\to\mathbb A^1_t\), the special fiber \(\Gamma_0\) has two irreducible components, each of which is contracted under one of the projections.
\end{example}

\begin{definition}[{\cite[Def. 1.1, Def. 1.5]{ChenStapleton-rational-endomorphisms}}]\label{defn:sustained-modifications}
A normal scheme \(X\) has \emph{(separably uni-)ruled modifications} if every exceptional divisor of every normal birational modification \(Y\to X\) is (separably uni-)ruled.
A normal scheme $X_R$ has \textit{sustained (separably uni-) ruled modifications} if there exists a generically finite extension of DVRs $R\subset R'$ such that for every generically finite extension of DVRs $R'\subset S$, the normalization of $X_{S}$ has (separably uni-)ruled modifications. Here we fix an algebraic closure of \(K\), and the ring extension \(R\subset R'\) being generically finite means that \(\Frac R'\) is a finite algebraic extension of \(K\).
\end{definition}

\begin{proposition}[The specialization homomorphism]\label{prop:specialization-hom} Let $X_R$ be an integral flat separated scheme over \(R\) and \(Z\subset X_0\) a reduced irreducible component appearing with coefficient one in the special fiber.
\begin{enumerate}
\item\label{item:find-open}
If $\phi$ is a birational automorphism of $X_\eta$ that specializes to $Z$, then there are open sets $U_1, U_2\subset X_R$ such that each \(U_i\) meets $Z$, $\phi$ gives an isomorphism between $U_1$ and $U_2$, and the restriction of $\phi$ to $X_{0}$ is an isomorphism:
\[
\phi|_{X_0 \cap U_1} \colon  Z\cap U_1\cong Z\cap U_2.
\]
\item\label{item:specialization-homomorphism}
The set of birational automorphisms that specialize to \(Z\) forms a subgroup of \(\Bir_K(X_\eta)\), which we denote \(\BirSpecialization\). There is a specialization group homomorphism:
\[
\sp_\eta \colon  \BirSpecialization \ra \Bir_k(Z).
\]
\item\label{item:geometric-specialization}
Assume \(X_\eta\) and \(Z\) are geometrically integral over \(K\) and \(k\), respectively. The group \(\Xi_{\overline{\eta}}(Z_{\overline{k}})\) is the colimit of \(\Xi_{\eta'}(Z_{k'})\) over generically finite extensions \(R\subset R'\) of DVRs; thus, there is an induced specialization homomorphism:
\[ \sp_\etabar \colon  \Xi_{\overline{\eta}}(Z_{\overline{k}})\ra \Bir_{\overline{k}}(Z_{\overline{k}}). \]
\item\label{item:Bir-specializes-nonruled}
\cite[IV Ex.~1.17.3]{Koll'ar-rational-curves}
Assume that \(X_R\) is proper and has (separably uni-)ruled modifications, and that $Z$ is the unique irreducible component of \(X_0\) that is not (separably uni-)ruled. Then every birational automorphism of \(X_\eta\) specializes to \(Z\). That is, \(\BirSpecialization=\Bir_K(X_\eta)\).
\item\label{item:specialization-sustained}
In the setting of \eqref{item:Bir-specializes-nonruled}, assume furthermore that $X_R$ has sustained (separably uni-)ruled modifications; that \(X_\eta\) and \(Z\) are geometrically integral over $K$ and $k$, respectively; and that \(Z_{\overline{k}}\) is not (separably uni-)ruled over \(\overline{k}\). Let $R\subset R''$ be a generically finite extension of DVRs. Then \(\Bir(X_\eta)\) is a subgroup of \(\Bir(X_{\eta''})\), and there is a further generically finite extension \(R''\subset R'\) of DVRs such that the diagram commutes:
\[
\begin{tikzcd}
\Bir_K(X_\eta)\arrow[d]\arrow[r,"\sp_\eta"]& \Bir_k(Z)\arrow[d]\\
\Bir_{K'}(X_{\eta'})\arrow[r,"\sp_{\eta'}"]& \Bir_{k'}(Z_{k'}).
\end{tikzcd}
\]
In particular, there is a homomorphism
\[ \sp_\etabar \colon  \Bir_{\overline{K}}(X_\etabar)\ra \Bir_{\overline{k}}(Z_{\overline{k}}). \]
\end{enumerate}
\end{proposition}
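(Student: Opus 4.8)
The plan is to build the map in parts \ref{item:find-open}--\ref{item:specialization-homomorphism} and then obtain \ref{item:geometric-specialization} and \ref{item:specialization-sustained} from \ref{item:Bir-specializes-nonruled} by base-change formalities. For \ref{item:find-open}: write $\Gamma\subset X_R\times_R X_R$ for the closure of the graph of $\phi$, let $\Gamma_Z\subset\Gamma_0$ be the distinguished component mapping birationally onto $Z$ under both projections $p_1,p_2$, and let $\gamma$ be its generic point, so $p_1(\gamma)=p_2(\gamma)=\eta_Z$, the generic point of $Z$. Because $Z$ appears with coefficient one, $\Oc_{X_R,\eta_Z}$ is a $1$-dimensional Noetherian local domain whose maximal ideal is generated by a uniformizer of $R$, hence a DVR. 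Each projection is birational, so it identifies $K(\Gamma)$ with $K(X_R)$ and exhibits $\Oc_{\Gamma,\gamma}$ as a local subring of $K(X_R)$ dominating the valuation ring $\Oc_{X_R,\eta_Z}$; since valuation rings are maximal under domination, $\Oc_{\Gamma,\gamma}=\Oc_{X_R,\eta_Z}$, so $p_1$ and $p_2$ are each a local isomorphism at $\gamma$. Shrinking to an open $\Gamma^\circ\ni\gamma$ on which both projections are open immersions and which meets only $\Gamma_Z$ among the components of $\Gamma_0$, and setting $U_i:=p_i(\Gamma^\circ)$ (further shrunk to avoid the other components of $X_0$), gives opens $U_1,U_2$ meeting $Z$ with $\phi=p_2\circ p_1^{-1}\colon U_1\xrightarrow{\sim}U_2$ an isomorphism over $R$, which restricts to an isomorphism $Z\cap U_1\xrightarrow{\sim}Z\cap U_2$ on central fibers. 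For \ref{item:specialization-homomorphism} I would first record the converse: if $\phi$ restricts to an isomorphism between opens $U_1,U_2$ each meeting $Z$ and no other component of $X_0$, then in $\Gamma$ the preimage of $U_1$ is the irreducible $U_1$, whose central fiber $Z\cap U_1$ lies in a single component of $\Gamma_0$, necessarily the unique one birational to $Z$ under both projections, so $\phi$ specializes to $Z$. Granting this characterization, stability of $\Xi_\eta(Z)$ under composition and inversion is immediate (intersect the relevant pairs of opens, using irreducibility of $Z$ to keep them dense in $Z$), and $\phi\mapsto\phi|_{Z\cap U_1}$ is a well-defined homomorphism $\sp_\eta$ (two choices of $U_1$ agree on a dense open of $Z$).

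For \ref{item:Bir-specializes-nonruled} I would follow Koll\'ar's argument. Given $\phi\in\Bir_K(X_\eta)$, form the graph closure $\Gamma$, proper over $R$, with $p_1,p_2\colon\Gamma\to X_R$ proper birational, and replace $\Gamma$ by its normalization (harmless near $Z$, as $X_R$ is regular at $\eta_Z$ by the coefficient-one hypothesis). From $\Gamma_0=p_1^*X_0$ and coefficient one, exactly one component $\Gamma_Z$ of $\Gamma_0$ is not contracted over $Z$ by $p_1$, and $p_1\colon\Gamma_Z\to Z$ is birational; in particular $\Gamma_Z$ is not (separably uni-)ruled. Every other component of $\Gamma_0$ is either contracted by $p_1$ -- hence an exceptional divisor of the normal modification $\Gamma\to X_R$, so (separably uni-)ruled by hypothesis -- or birational onto a (separably uni-)ruled component of $X_0$; thus $\Gamma_Z$ is the \emph{unique} non-(separably uni-)ruled component of $\Gamma_0$. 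Since that property is intrinsic to $\Gamma_0$, running the same analysis for $p_2$ identifies its unique-non-contracted-over-$Z$ component with $\Gamma_Z$ again, so $p_2\colon\Gamma_Z\to Z$ is birational as well and $\phi$ specializes to $Z$.

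For \ref{item:geometric-specialization} and \ref{item:specialization-sustained} the mechanism is base change. Along a generically finite $R\subset R'$, the witnessing datum $(U_1,U_2)$ of an element of $\Xi_\eta(Z)$ base-changes to one for $X_{R'}$: geometric integrality of $X_\eta$ and of $Z$ guarantees that $X_{R'}$ stays integral, that $Z_{k'}$ stays irreducible, reduced, and of coefficient one in $X_0'=X_0\times_k k'$, and that the base-changed opens still meet $Z_{k'}$ and no other component of $X_0'$. Since every birational automorphism of $X_{\etabar}$ and every such witnessing datum is defined over a finite subextension, this exhibits $\Xi_{\etabar}(Z_{\overline{k}})$ as the filtered colimit $\varinjlim_{R'}\Xi_{\eta'}(Z_{k'})$ with base-change transition maps, and the maps $\sp_{\eta'}$ -- compatible with base change because forming the witnessing open locus commutes with it -- assemble into $\sp_{\etabar}$ with target $\varinjlim_{R'}\Bir_{k'}(Z_{k'})=\Bir_{\overline{k}}(Z_{\overline{k}})$. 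For \ref{item:specialization-sustained} one additionally invokes the sustained hypothesis to choose, after $R\subset R''$, a further generically finite $R''\subset R'$ over which the normalizations of $X_{R''}$ and $X_{R'}$ still have (separably uni-)ruled modifications and $Z_{\overline{k}}$ is still not (separably uni-)ruled; then \ref{item:Bir-specializes-nonruled} applies over $R$, $R''$, and $R'$, so all of $\Bir$ specializes, and commutativity of the square follows by computing $\sp$ on a common witnessing open pair and using that this pair base-changes as in \ref{item:find-open}.

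The main obstacle is the base-change bookkeeping underlying \ref{item:specialization-sustained}: a ramified extension can destroy normality and the ruled-modifications property, and the sustained hypothesis is precisely what is inserted to recover them -- one must then verify that the witnessing opens of \ref{item:find-open} and the component identification in \ref{item:Bir-specializes-nonruled} genuinely survive normalization and base change, and that the vertical maps in the square are the honest base-change injections (which uses geometric integrality of $X_\eta$ and $Z$). A second, internal delicate point in \ref{item:Bir-specializes-nonruled} is that a component of $\Gamma_0$ dominating a (separably uni-)ruled component of $X_0$ could a priori fail to be (separably uni-)ruled if $X_0$ is non-reduced; this does not arise in the intended applications, where $X_0$ is reduced, and I would either assume reducedness or argue around it.
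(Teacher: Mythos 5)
Your proposal follows essentially the same route as the paper's proof: the coefficient-one hypothesis makes $\mathcal{O}_{X_R,\eta_Z}$ a DVR, domination of that DVR inside the common function field forces both projections of the graph closure to be local isomorphisms at the generic point of the distinguished component (giving part (1), with uniqueness from separatedness), parts (2) and (3) come from intersecting and base-changing the witnessing opens, part (4) uses that a non-(separably uni-)ruled divisor on a normal modification of $X_R$ cannot be exceptional, and part (5) combines the sustained hypothesis with normalization and part (3). Two points deserve comment. In part (4) you classify \emph{all} components of $\Gamma_0$, asserting that any component not contracted by $p_1$ maps \emph{birationally} onto a component of $X_0$; this requires the target component to appear with coefficient one, which the hypotheses guarantee only for non-(separably uni-)ruled components. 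You flag exactly this and propose assuming $X_0$ reduced (equivalently, all coefficients one), which suffices for every application in the paper. The paper's own argument is organized to need less: it tracks only the single component $\Gamma_0'$ of $\Gamma_0$ that is birational to $Z$ under $p_2$, observes it is not (separably uni-)ruled and hence not an exceptional divisor of the (normalized) modification $p_1$, and then applies the DVR argument to the component of $X_0$ that it dominates --- though the paper, too, is silent on why that dominated component could not be a ruled component of higher multiplicity, so the two write-ups sit at a comparable level of rigor here, and your global classification merely makes the gap more load-bearing. Separately, in part (5) you propose applying part (4) over the intermediate extension $R''$; the sustained hypothesis does not give ruled modifications over $R''$, only over sufficiently large extensions $R'$, but this is harmless because the commutative square in the statement involves only $R$ and $R'$ --- which is exactly how the paper routes the argument, via the inclusion $\Bir(X_{\eta''})\subset\Bir(X_{\eta'})$ followed by specialization from $R'$.
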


\begin{proof}
Let \(\tilde{\phi}\colon X_R\dashrightarrow X_R\) be the birational map over \(R\) obtained from the closure \(\Gamma\subset X_R\times_R X_R\) of the graph of \(\phi\). Let \(\Gamma_0'\) be the unique component of \(\Gamma_0\) mapping birationally to \(Z\) under both projections, \(U_1\subset X_R\) the largest open subset on which \(\tilde{\phi}\) is an isomorphism, and \(U_2=\tilde{\phi}(U_1)\). Each \(U_i\) meets \(Z\) by maximality, and \(\phi_0=\tilde{\phi}|_{Z}\) as rational maps. This proves~\eqref{item:find-open}.

For~\eqref{item:specialization-homomorphism}, it is clear that the identity on \(X_\eta\) specializes to the identity on \(Z\).
If \(\phi\) specializes to \(Z\), then so does \(\phi^{-1}\) by exchanging the first and second projections. It remains to show that if \(\phi\) and \(\psi\) specialize to \(Z\), then so does \(\psi\circ\phi\), and that the specialization of the composition is the composition of the specializations.
For this, let \(\phi,\psi\in\BirSpecialization\), and let \({\tilde{\phi}}|_{U_{1,\tilde{\phi}}}\colon U_{1,\tilde{\phi}}\to U_{2,\tilde{\phi}}\) and \({\tilde{\psi}}|_{U_{1,\tilde{\psi}}}\colon U_{1,\tilde{\psi}}\to U_{2,\tilde{\psi}}\) be morphisms defined on the largest open subsets on which \({\tilde{\phi}}\) and \({\tilde{\psi}}\), respectively, induce isomorphisms. Let \(U_2=U_{2,\tilde{\phi}}\cap U_{1,\tilde{\psi}}, U_1={\tilde{\phi}}^{-1}(U_2),\) and \(U_3={\tilde{\psi}}(U_2)\). Then each \(U_i\cap Z\neq\emptyset\), so the assertion follows from the fact that \({\tilde{\psi}}|_{U_2\cap Z}\circ{\tilde{\phi}}|_{U_1\cap Z}=({\tilde{\psi}}\circ{\tilde{\phi}})|_{U_1\cap Z}\).

For \eqref{item:geometric-specialization}, if \(R\subset R'\) is a generically finite extension of DVRs, then \(X_{\eta'}\) and \(Z'\coloneqq Z\otimes_k k'\) are both integral and \(Z'\) has coefficient one in the special fiber, so they satisfy the assumptions in Definition~\ref{defn:specializes}. If \(\Gamma\) is the closure of the graph of an element of \(\BirSpecialization\), then by assumption it has a unique component mapping birationally to \(Z\) under the projections. Therefore, the base change to \(k'\) gives a component of the special fiber of \(\Gamma\otimes_R R'\) birational to \(Z_{k'}\) under the projections, and there is a unique such component since \(\Gamma\otimes_R R'\to R'\) is flat. This proves that \(\BirSpecialization\) is a subgroup of \(\Xi_{\eta'}(Z')\).

Before showing \eqref{item:Bir-specializes-nonruled}, first suppose that \(Y_R\) and \(Y'_R\) are flat integral schemes over \(\Spec (R)\) such that \(Y_R\) has (separably uni-)ruled modifications, every non-(separably uni-)ruled component of \(Y_0\) appears with coefficient one in \(Y_0\), \(Y'_R\) is proper, and \(Y'_0\) has a unique irreducible component \(Z'\) that is not (separably uni-)ruled. Then any birational map \(\phi\colon Y_\eta\dashrightarrow Y'_{\eta}\) induces a birational map \(\phi_0\colon Z\dashrightarrow Z'\) from some component \(Z=Z_{\phi}\) of \(Y_{0}\) that is not (separably uni-)ruled (c.f. \cite[IV Ex.~1.17]{Koll'ar-rational-curves}).
For this claim, first observe that the assumption on the coefficients of \(Y_0\) implies that the local ring at the generic point of every non-(separably uni-)ruled component of \(Y_0\) is a DVR. Now let \(\Gamma\) be the closure of the graph of \(\phi\) in \(Y_{R}\times_R Y'_{R}\), and let \(\Gamma_0'\) be the unique component of \(\Gamma_0\) mapping birationally to \(Z'\). Since \(Y_{R}\) has (separably uni-)ruled modifications and \(Z'\) is not (separably uni-)ruled, then \(\Gamma_0'\) maps birationally to a component \(Z\) of \(Y_{0}\), so the composition \(\phi_0 \colon Z\dashrightarrow\Gamma_0'\dashrightarrow Z'\) is a birational map.

We will now apply this to $X_R = Y_R =X'_{R}$ and $Z = Z'$ to prove \eqref{item:Bir-specializes-nonruled}. Let \(U_1\subset X\) be the largest open subset on which \(\tilde{\phi}\) is an isomorphism, and let \(U_2=\tilde{\phi}(U_1)\). Note that \(\tilde{\phi}^{-1}(U_2\cap X_{0})=U_1\cap X_{0}\), so each \(U_i\) meets \(Z\) by maximality, and \(\phi_0=\tilde{\phi}|_{Z}\) as rational maps.

For \eqref{item:specialization-sustained}, let \(R\subset R'\) be as in Definition~\ref{defn:sustained-modifications}. After replacing \(R'\) by a localization of its integral closure in \(K'\otimes_K K''\) we may assume \(R\subset R''\subset R'\). Then \(X_{\eta'}\) and \(Z'\coloneqq Z_{k'}\) are integral, and \(Z'\) appears with coefficient one in the central fiber of \(X_{R'}\), so the local ring of \(X_{R'}\) at the generic point of \(Z'\) is a DVR. Thus, the normalization \(X_{R'}^\nu\to X_{R'}\) is an isomorphism at the generic point of \(Z'\), so on the special fiber there is a component \(W\) mapping birationally to \(Z'\).
Now we apply \eqref{item:Bir-specializes-nonruled} to obtain a specialization map
\[\Bir_{K'}(X_{\eta'})=\Bir_{K'}(X_{\eta'}^\nu) \to \Bir_{k'}(W) = \Bir_{k'}(Z').\]
\eqref{item:specialization-sustained} then follows from \eqref{item:geometric-specialization}.
\end{proof}

Let \(X_R\) be a family of smooth proper varieties. The previous proposition describes how to specialize birational automorphisms, and one may wonder what the image of the subgroup \(\Aut_K(X_\eta)\cap\BirSpecialization \subset \Bir_K(X_\eta)\) is in \(\Bir_k(X_0)\).
Let \(\phi \in \Aut_K (X_\eta) \cap \BirSpecialization\).
If there is an ample divisor \(\mathcal L\) on \(X_\eta\) such that \(\mathcal L\) and \(\phi^*\mathcal L\) both extend to relatively ample divisors on the family \(X_R\), then a theorem of Matsusaka and Mumford shows that \(\phi\) extends to a (regular) automorphism \(\tilde{\phi}\in\Aut_R(X_R)\) and that \(\sp_\eta(\phi)\) is a (regular) automorphism of \(X_0\) \cite[Cor.~1]{MatsusakaMumford64}. Without this additional assumption that \(\phi\) preserves an ample class, one may ask:

\begin{question}
Is there a smooth proper family $X_R$ and an element $\phi \in \Aut(X_\eta)\cap \Xi_\eta(X_0)$ such that $\sp_\eta(\phi)\in \Bir(X_0)$ is not a regular automorphism?
\end{question}

\noindent In the next section, we will give an example of a family of K3 surfaces and an element \(\iota\in\Aut_K(X_\eta)\) which does not extend to a regular automorphism in \(\Aut_R(X_R)\) (Example~\ref{exmp:K3-example}). In our example \(\Aut(X_0)=\Bir(X_0)\), so \(\sp_\eta(\iota)\) is still a regular automorphism of \(X_0\).

\section{Kernel of the specialization homomorphism}
In this section, we study the kernel of the specialization homomorphism from \S\ref{sec:Specialization}. After regularizing an order \(\ell\) birational automorphism on a birational model of \(X_R\), our argument shows that any component of the special fiber fixed by the \(\ZZ/\ell\ZZ\) group action must be a multiple component.

\begin{proposition}\label{prop:specialization-order-l}
Let \(X_R\) be an integral flat separated scheme over \(R\) and \(Z\subset X_0\) an irreducible component.
Let \(\phi\in\BirSpecialization\) be a birational automorphism of order $\ell$, for some integer $\ell > 1$.
\begin{enumerate}
\item\label{item:automorphism-on-open}  There is an affine open $U\subset X_R$ meeting $Z$ on which \(\phi\) induces an automorphism over \(R\).
\item\label{item:quotient-by-mu_l} If $\ell$ is invertible in $R$, then the quotient $U/\langle\phi\rangle$ exists and \((U/\langle\phi\rangle)_0=(U\cap Z)/\langle\sp_\eta(\phi)\rangle\).
\item\label{item:mu_l-nontrivial-on-special-fiber} If $\ell$ is invertible in $R$, then \(\sp_\eta(\phi)\) has order \(\ell\) in \(\Bir_k(X_0)\). In particular, $\phi\not\in\ker(\sp_\eta)$.
\end{enumerate}
\end{proposition}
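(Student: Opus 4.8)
The plan is to prove the three parts in sequence, using the fact from Proposition~\ref{prop:specialization-hom}\eqref{item:find-open} that $\phi$ restricts to a genuine isomorphism on an open set meeting $Z$. First, for \eqref{item:automorphism-on-open}, I would start from the open sets $U_1, U_2 \subset X_R$ produced by Proposition~\ref{prop:specialization-hom}\eqref{item:find-open}, on which $\tilde\phi$ is an isomorphism and which both meet $Z$. Since $\langle\phi\rangle \cong \ZZ/\ell\ZZ$ acts, I want a single affine open $U$ meeting $Z$ that is invariant under all of $\phi, \phi^2, \ldots, \phi^{\ell-1}$. The standard trick: pick an affine open $V$ meeting $Z$ that is contained in the locus where all powers $\tilde\phi^{j}$ ($0 \le j \le \ell-1$) are defined as isomorphisms onto their images, then set $U = \bigcap_{j=0}^{\ell-1} \tilde\phi^{\,j}(V)$. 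Each $\tilde\phi^{\,j}(V)$ is open and meets $Z$ because $\tilde\phi^{\,j}$ restricts to an automorphism of (an open in) $Z$ carrying $Z\cap V$ — which is nonempty — into $Z$; and the intersection meets $Z$ because $Z$ is irreducible, so finitely many opens meeting it have common intersection meeting it. Shrinking $U$ to a distinguished affine open still meeting $Z$, we get an affine $\langle\phi\rangle$-invariant $U$ on which $\phi$ acts by an order-$\ell$ automorphism over $R$.

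For \eqref{item:quotient-by-mu_l}: since $U = \Spec A$ is affine and $\langle\phi\rangle$ is finite, the quotient $U/\langle\phi\rangle = \Spec A^{\langle\phi\rangle}$ exists. The content is the identification of the special fiber. Here is where $\ell$ invertible in $R$ matters: because $|\langle\phi\rangle| = \ell$ is a unit, the averaging (Reynolds) operator $\frac{1}{\ell}\sum_{j} \phi^j$ makes $A^{\langle\phi\rangle} \hookrightarrow A$ a split inclusion of $A^{\langle\phi\rangle}$-modules, so taking invariants is exact and commutes with the base change $-\otimes_R k$. Concretely, $(A \otimes_R k)^{\langle\phi\rangle} = A^{\langle\phi\rangle}\otimes_R k$. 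Now $\Spec(A\otimes_R k) = U\cap X_0$, and since $Z$ appears with multiplicity one and $U$ is chosen to meet only the component $Z$ of $X_0$ (shrink further if necessary to avoid the other components, using that $Z$ is a component so its generic point has an open neighborhood meeting no other component), $U \cap X_0 = U\cap Z$ as schemes. The induced action on $U\cap Z$ is by $\sp_\eta(\phi) = \tilde\phi|_Z$, again by Proposition~\ref{prop:specialization-hom}\eqref{item:find-open}. Hence $(U/\langle\phi\rangle)_0 = \Spec\big((A\otimes_R k)^{\langle\phi\rangle}\big) = (U\cap Z)/\langle\sp_\eta(\phi)\rangle$.

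For \eqref{item:mu_l-nontrivial-on-special-fiber}, suppose for contradiction that $\sp_\eta(\phi)$ has order $m < \ell$ in $\Bir_k(X_0)$; replacing $\phi$ by a power I may assume $m$ is a prime dividing $\ell$ and $\sp_\eta(\phi) = \id$, i.e. $\tilde\phi|_{U\cap Z} = \id$ after shrinking $U$. Then $\langle\phi\rangle \cong \ZZ/\ell\ZZ$ fixes the component $U\cap Z$ of $(U)_0$ pointwise (as a reduced scheme). The key local computation: at the generic point of $Z$, the local ring $\Oc_{X_R, Z}$ is a DVR (since $Z$ has multiplicity one in $X_0$, so a uniformizer $t$ of $R$ is also a uniformizer there), and $\phi$ acts on it fixing the residue field; because $\ell$ is invertible, we can linearize the action on the cotangent space — but the cotangent space is one-dimensional, spanned by $dt$, and $\phi$ fixes $t$ up to a unit congruent to $1$ mod $\mathfrak m$... here I need to be careful: actually $\phi$ is an automorphism \emph{over} $R$, so it fixes $t$ exactly. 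The point is then that an automorphism of a DVR over $R$ of finite order $\ell$ invertible in $R$, acting trivially on the residue field, must be trivial — because it acts trivially on the associated graded $\mathrm{gr}_{\mathfrak m}\Oc$ (each graded piece $\mathfrak m^n/\mathfrak m^{n+1}$ is a $k$-line on which an order-dividing-$\ell$ automorphism fixing $k$ acts by a root of unity, and triviality on $\mathfrak m/\mathfrak m^2 = k\cdot dt$ forces the root of unity to be $1$, hence by the standard filtered/complete argument using invertibility of $\ell$ the automorphism is trivial on $\Oc$ itself, or at least on $\widehat\Oc$, which suffices). This contradicts that $\phi$ has order exactly $\ell$ as a birational automorphism. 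The last sentence, $\phi \notin \ker(\sp_\eta)$, is immediate since $\sp_\eta(\phi)$ has order $\ell > 1$.

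\textbf{Main obstacle.} I expect the crux to be part \eqref{item:mu_l-nontrivial-on-special-fiber}: one must rule out $\sp_\eta(\phi)$ having \emph{any} order strictly smaller than $\ell$, not merely rule out $\sp_\eta(\phi) = \id$. Reducing to the case $\sp_\eta(\phi) = \id$ by passing to a power $\phi^{\ell/m}$ of prime order is clean, but then the real work is the DVR/complete-local computation showing that a finite-order ($\ell$ invertible) automorphism of $\widehat{\Oc}_{X_R, Z}$ over $R$ that is trivial on the residue field is trivial — equivalently, that the fixed locus of a $\ZZ/\ell\ZZ$-action (with $\ell$ invertible) cannot contain a Cartier divisor that is itself a reduced component of the special fiber. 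The cleanest formulation is: after regularizing, $U/\langle\phi\rangle \to U$ being ramified exactly along the fixed locus, and the special fiber computation from \eqref{item:quotient-by-mu_l}, force the fixed component to be a \emph{multiple} component of $(U/\langle\phi\rangle)_0$ pulled back — contradicting multiplicity one. I would present the argument via the associated-graded linearization, since that avoids completions and works integrally once $\ell \in R^\times$.
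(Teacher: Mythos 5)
Your parts \eqref{item:automorphism-on-open} and \eqref{item:quotient-by-mu_l} follow essentially the paper's own proof: the invariant affine open \(U=\bigcap_{j}\tilde{\phi}^{j}(V)\), and the averaging/Reynolds argument identifying \((A\otimes_R k)^{\phi}\) with \(A^{\phi}\otimes_R k\). One caution: do not ``shrink \(U\) to a distinguished affine open'' at the end of \eqref{item:automorphism-on-open} --- that destroys \(\langle\phi\rangle\)-invariance. It is also unnecessary: since \(X_R\) is separated, a finite intersection of affine opens is already affine, so \(\bigcap_j\tilde{\phi}^j(V)\) is affine as it stands. The same remark applies when you shrink in \eqref{item:quotient-by-mu_l} to avoid the other components of \(X_0\): shrink first, then re-symmetrize by intersecting the \(\tilde{\phi}^j\)-translates.

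For \eqref{item:mu_l-nontrivial-on-special-fiber} you take a genuinely different route, and it works. The paper argues globally: by \eqref{item:quotient-by-mu_l} the special fiber of \(q\colon U\to U/\langle\phi\rangle\) is \((U\cap Z)\to(U\cap Z)/\langle\sp_\eta(\phi)\rangle\), and the projection formula applied to the Cartier divisor \((U/\langle\phi\rangle)_0\) --- whose pullback is the reduced divisor \(U_0\), by the multiplicity-one hypothesis --- forces this restriction to have degree \(\ell\), so \(\sp_\eta(\phi)\) has order \(\ell\). You instead reduce to a power \(\psi=\phi^{\ell/q}\) of prime order \(q\) with \(\sp_\eta(\psi)=\id\) (the reduction is sound: \(m\mid\ell\) because \(\sp_\eta\) is a homomorphism, and \(m\mid \ell/q\) for any prime \(q\mid \ell/m\)), and then compute locally at the generic point of \(Z\): there \(\Oc_{X_R,Z}\) is a DVR with uniformizer \(\pi\in R\) (multiplicity one again), \(\psi\) fixes \(\pi\) and acts trivially on the residue field \(k(Z)\), hence trivially on \(\mathrm{gr}_{\mathfrak m}\), and the unipotent-automorphism lemma (write \(\psi=1+\delta\) with \(\delta(\mathfrak m^n)\subset\mathfrak m^{n+1}\), use \(q\in R^{\times}\) and Krull intersection) gives \(\psi=\id\) on \(\Oc_{X_R,Z}\), hence on its fraction field \(K(X_\eta)\), a contradiction. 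Your version has the mild advantage of not needing part \eqref{item:quotient-by-mu_l} or the projection formula at all, only part \eqref{item:automorphism-on-open} and multiplicity one; the paper's version is shorter given that \eqref{item:quotient-by-mu_l} is already established. Both isolate the same phenomenon, stated explicitly at the start of the paper's \S 2: a component of the special fiber fixed by the \(\ZZ/\ell\ZZ\)-action would have to be a multiple component.
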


\begin{proof}
Let \(\tilde{\phi}\in\Bir_R(X_R)\) be induced by \(\phi\), and set \(U=\bigcap_{i=1}^{\ell-1}\tilde{\phi}^i(U')\), where \(U'\subset U_1\cap U_2\) is an affine open subset meeting \(Z\), and \(U_1\) and \(U_2\) are as in Proposition~\ref{prop:specialization-hom}\eqref{item:find-open}. This shows \eqref{item:automorphism-on-open}.

Now let \(U=\Spec A\), and let \(\phi\in\Aut_R(A)\) denote the induced automorphism. We write \((-)^\phi\) to mean the submodule of \(\phi\)-invariant elements of an \(A\)-module.
The quotient \(\Spec(A^\phi)\) is integral and normal \cite[Thm.~4.16]{Moonen-AV}, and it remains to show that \((A^\phi)\otimes_R k \cong (A\otimes_R k)^\phi\).

Left exactness of \((-)^\phi\) implies \(A^\phi/(\pi A)^{\phi}\hookrightarrow(A\otimes_R k)^\phi\) is injective. Since \(\phi\) is an automorphism over \(R\) and \(R\to A\) is flat, we have \((\pi A)^\phi=\pi(A^\phi)\) and \(A^\phi/(\pi A)^\phi\cong(A^\phi)\otimes_R k\), where \(\pi\) is a uniformizer of \(R\). This shows injectivity of \((A^\phi)\otimes_R k\to(A\otimes_R k)^\phi\). For surjectivity, let \(a\in (A\otimes_R k)^\phi\) and let \(\tilde{a}\in A\) be a lift. Then \(\frac{1}{\ell}\sum_{i=0}^{\ell-1}\phi^i(\tilde{a})\) is an element of \(A^\phi\) mapping to \(a\in A\otimes_R k\). This shows \eqref{item:quotient-by-mu_l}.

For \eqref{item:mu_l-nontrivial-on-special-fiber}, let \(G=\langle\phi\rangle\subset \Aut_R(U)\).
Since the quotient morphism \(q\colon U\to(U/G)\) is a morphism over \(R\), the pullback of the effective Cartier divisor \((U/G)_0\) on \(U/G\) is \(U_0\) \cite[\href{https://stacks.math.columbia.edu/tag/01WV}{Tag 01WV},\href{https://stacks.math.columbia.edu/tag/0C4U}{Tag 0C4U}]{stacks-project}. The projection formula \cite[Ch. 9 Prop. 2.11]{Liu-AG-book} yields
\(
q_*[q^*((U/G)_0)]=\ell[(U/G)_0].
\)
The restriction of \(q\) to \(U_0\) is thus a finite morphism of degree \(\ell\), so the order of \(\sp_\eta(\phi)\) in \(\Bir(X_0)\) must be \(\ell\).
\end{proof}

\begin{remark}
Proposition~\ref{prop:specialization-order-l}\eqref{item:mu_l-nontrivial-on-special-fiber} shows that the kernel of the specialization homomorphism does not contain any torsion of order coprime to the characteristic of the residue field of \(R\). In some special cases, the kernel is even trivial: when the specialization homomorphism \(\Pic(X_{\overline{\eta}})\to\Pic(X_{\overline{0}})\) is an isomorphism and \(\HH^0(X_{\overline{0}},\mathcal T_{X_{\overline{0}}})=0\), Lieblich and Maulik show using the Matsusaka--Mumford theorem \cite[Cor.~1]{MatsusakaMumford64} and a deformation theory argument that the specialization homomorphism is injective \cite[\S 2]{LieblichMaulik18}.
\end{remark}

However, injectivity does not hold in general. We now give a series of examples exhibiting nontrivial elements in the kernel of the specialization map.

\begin{example}\label{exmp:cremona-example}
Let \(k\) be a field, and let \(P_1,P_2,Q_t\in\mathbb P^2(k)\) be points such that \(P_1,P_2,Q_t\) are not collinear for \(t\neq 0\), but \(P_1,P_2,Q_0\) lie on a common line \(L\). Denote the subscheme \(P_1 + P_2 + Q_t\) by \(Y_t\), and consider the linear system of conics with base locus \(Y_t\). For \(t\neq 0\) this defines the quadratic transformation with base locus \(P_1,P_2,Q_t\), but on the special fiber $\PP \HH^0(\mathbb P^2,\mathcal I_{Y_0}(2))^{\vee}=L+|\mathcal O_{\mathbb P^2}(1)|$. For the family \(\mathbb P^2\times \mathbb A^1_t\to\mathbb A^1_t\) this gives an infinite order element \(\phi\) in the birational automorphism group of the generic fiber whose specialization is the identity. Explicitly, one can choose coordinates so that \(\phi\colon [x:y:z]\mapsto [x(x-ty): (x-tz)y: (x-ty)z]\).
\end{example}

It is well known that birational automorphisms on K3 surfaces extend to regular automorphisms, so in the next example the specialization homomorphism is defined on $\Bir = \Aut$.

\begin{example}\label{exmp:K3-example}
Let \(X\) be a complex K3 surface of Picard rank \(2\) obtained as the intersection of two divisors of type \((1,1)\) and \((2,2)\) in \(\PP^2\times\PP^2\). There are two projections $p_{j} \colon X \rightarrow \PP^{2}$ ($j = 1, 2$), which induce involutions $\iota_{j}$ on $X$. By \cite[Thm.~2.9]{Wehler88}, for a general such $X$ it is known that the automorphism group of $X$ is the free product $\Aut(X) \cong \ZZ/2\ZZ \ast \ZZ/2\ZZ$ generated by the involutions. On the other hand, there are special examples of such K3 surfaces where the involutions commute. In the coordinates $([X_{0}: X_{1}: X_{2}], [Y_{0}: Y_{1}: Y_{2}]) \in \PP^{2} \times \PP^{2}$, one may take the complete intersection $X_{0}$ given by the equations
\[ \sum_{i,j\in\{0,1\}} a_{ij} X_{i}Y_{j} = 0 \quad \text{and} \quad \sum_{i,j\in\{0,1,2\}} b_{ij} X_{i}^{2} Y_{j}^{2} = 0, \]
which is smooth for general coefficients $a_{ij}$ and $b_{ij}$. On \(X_0\) the covering involutions extend to (regular) involutions:
\[ \iota_{1,0} \colon ([X_{0}: X_{1}: X_{2}] , [Y_{0}: Y_{1}: Y_{2}]) \mapsto ([X_{0}: X_{1}: (-1) \cdot X_{2}] , [Y_{0}: Y_{1}: Y_{2}]), \]
\[ \iota_{2,0} \colon ([X_{0}: X_{1}: X_{2}] , [Y_{0}: Y_{1}: Y_{2}]) \mapsto ([X_{0}: X_{1}: X_{2}] , [Y_{0}: Y_{1}: (-1) \cdot Y_{2}]). \]
By construction, these involutions on $X_{0}$ automatically commute. This shows that the birational automorphism $\iota_{1}\iota_{2}\iota_{1}\iota_{2}$ on the general K3 surface $X$ has infinite order but specializes to the identity on the special fiber $X_{0}$. On the special fiber, each projection \(p_j\) contracts the conic over \([0:0:1]\), so the Picard rank jumps and the covering involution does not extend to a regular involution on the family (c.f. \cite[Thm.~2.1]{LieblichMaulik18}).

One may exhibit similar behavior on K3 surfaces of type (2,2,2) in $(\PP^{1})^{3}$, see \cite[\S 3]{Wang-thesis} and \cite[Prop.~3.5]{Schaffler2018}. For an example with Enriques surfaces, see \cite{BarthPeters-Enriques-surfaces} (c.f. \cite[IV Ex.~1.17.4]{Koll'ar-rational-curves}).

\end{example}

\begin{example}
In mixed characteristic \((0,p)\), the kernel of \(\sp_\eta\) can contain \(p\)-torsion. It is not clear if this can be accounted for by considering an additional scheme structure on $\Bir(X)$. For instance, the group of \(p\)-torsion geometric points of an elliptic curve is isomorphic to \(\ZZ/p\ZZ\times\ZZ/p\ZZ\) in characteristic \(0\), but is isomorphic to \(\ZZ/p\ZZ\) or is trivial in characteristic \(p\), so translating by a $p$-torsion point that specializes to the identity gives such an example. Similarly one can construct examples by considering $\mu_p$ actions on a scheme in mixed characteristic $(0,p)$. This happens when considering $\mu_p$-covers of schemes, and it will be an important tool in the next section.
\end{example}

\section{Applications to birational automorphisms of Fano hypersurfaces}

We now give the proofs of Theorem~\ref{thm:torsion-p^e} and Corollary~\ref{cor:no-torsion}. The key ingredients used are the specialization homomorphism for \(\Bir\), a result of the first and third authors showing that certain \(p\)-cyclic covers in characteristic \(p\) have no birational automorphisms \cite[Cor. C]{ChenStapleton-finite-BirX}, and a construction of Mori \cite{Mori75} (see also \cite[V.5.14.4]{Koll'ar-rational-curves}) that allows us to degenerate from a hypersurface to a \(p\)-cyclic cover. We begin by recalling Mori's construction:

\begin{construction}\label{construction:mori}
Let \(f,g\in R[x_0,\ldots,x_{n+1}]\) be homogeneous polynomials of degree \(pe\) and \(e\), respectively. Assume \(g^p-f\) is not uniformly \(0\). Let \(Z=(y^p-f=g-\pi y=0)\subset\mathbb P_R(1^{n+2};e)\). Then \(Z_\eta\) is isomorphic to the degree \(pe\) hypersurface \((g^p-\pi^pf=0)\subset\mathbb P^{n+1}_K\), and \(Z_0\) is isomorphic to a \(p\)-cyclic cover of the degree \(e\) hypersurface \((g=0)\subset\mathbb P^{n+1}_k\).
\end{construction}

There are two different degenerations that are most useful in our case:
\begin{itemize}
    \item A $p$-cyclic cover in mixed characteristic $(0,p)$, and
    \item Mori's construction in equicharacteristic \(0\).
\end{itemize}
By \cite[Thm.~C \& Ex.~1.7]{ChenStapleton-rational-endomorphisms}, these families have sustained separably uniruled modifications and sustained ruled modifications, respectively. Therefore we may apply Proposition~\ref{prop:specialization-hom}\eqref{item:specialization-sustained}.

\begin{proposition}\label{prop:BirtorsX-p}
Let \(p\) be a prime and let \(n,e\geq 3\) be integers such that \((p-1)e\leq n-e\leq pe-3\). Furthermore, assume \(n\) is even if \(p=2\). If \(X\subset\mathbb P^{n+1}_{\mathbb C}\) is a very general hypersurface of degree \(pe\), then any finite order element of \(\Bir(X)\)  has order a power of $p$.
\end{proposition}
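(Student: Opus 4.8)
The plan is to combine the specialization homomorphism of \S\ref{sec:Specialization} with the characteristic-$p$ rigidity result of \cite[Cor.~C]{ChenStapleton-finite-BirX}, using Mori's Construction~\ref{construction:mori} to build the right family. Suppose $\phi\in\Bir(X)$ has finite order $\ell$ with $\ell$ not a power of $p$; write $\ell = \ell' m$ where $\ell'>1$ is coprime to $p$ (so $\phi^m$ has order $\ell'$), and it suffices to derive a contradiction assuming $\ell=\ell'$ is coprime to $p$. Since $X$ is very general of degree $pe$ with $pe = n-e+ (n-(n-2e)) \le n+1$ in the Fano range forced by the inequalities $(p-1)e\le n-e\le pe-3$, $X$ is non-ruled (in fact of Fano index $\le$ small, and more importantly $n\ge 2e+1\ge 7$ means it is not even uniruled by the degree bound). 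I would realize $X$ as the generic fiber $Z_\eta$ of a family $Z_R$ over $R = \CC[[\pi]]$ as in Construction~\ref{construction:mori}: choose general homogeneous $f,g$ over $R$ (or over $\CC$, constant) of degrees $pe$ and $e$, so that $Z_\eta \cong (g^p - \pi^p f = 0)\subset \PP^{n+1}_K$ is a very general degree $pe$ hypersurface, while $Z_0$ is the $p$-cyclic cover of the degree $e$ hypersurface $(g=0)\subset\PP^{n+1}_k$ branched along $(f|_{g=0})$, with $k=\CC$. Here I use the equicharacteristic $0$ version of Mori's construction, which by \cite[Thm.~C \& Ex.~1.7]{ChenStapleton-rational-endomorphisms} has sustained ruled modifications, so Proposition~\ref{prop:specialization-hom}\eqref{item:specialization-sustained} applies provided $Z_0$ (equivalently $Z_{0,\overline k}$) is not ruled.

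The key geometric input is that the special fiber $Z_0$ is the unique non-ruled component of the central fiber and that $\Bir_{\overline k}(Z_{0,\overline k})$ has no torsion coprime to $p$ — in fact, by \cite[Cor.~C]{ChenStapleton-finite-BirX}, for general coefficients this $p$-cyclic cover has $\Bir = \{1\}$, or at least its birational automorphism group has only $p$-power torsion. The relevant numerical hypotheses $(p-1)e\le n-e$ and $n-e\le pe-3$ are exactly what is needed for (a) the $p$-cyclic cover $Z_0$ of the degree $e$ hypersurface to be of general type (or at least non-uniruled), so Proposition~\ref{prop:specialization-hom}\eqref{item:Bir-specializes-nonruled},\eqref{item:specialization-sustained} give a specialization homomorphism $\sp_{\etabar}\colon \Bir_{\overline K}(X_{\overline\eta})\to\Bir_{\overline k}(Z_{0,\overline k})$, and (b) the hypersurface $Z_\eta$ sitting in the correct range. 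Granting this, the argument concludes: the order-$\ell$ element $\phi$ lies in $\Bir(X_{\etabar})=\Xi_{\etabar}(Z_{0,\overline k})$; its image $\sp_{\etabar}(\phi)\in\Bir_{\overline k}(Z_{0,\overline k})$ has order coprime to $p$ in a group with no such torsion, hence is trivial; but then $\phi\in\ker(\sp_{\etabar})$, contradicting Proposition~\ref{prop:specialization-order-l}\eqref{item:mu_l-nontrivial-on-special-fiber}, which says no element of order $\ell$ invertible in $R$ lies in the kernel. (To apply Proposition~\ref{prop:specialization-order-l} one works over a suitable finite extension $R'$ where $\ell$ is still invertible — automatic in equicharacteristic $0$ — and uses the compatibility of $\sp$ with base change from Proposition~\ref{prop:specialization-hom}\eqref{item:specialization-sustained}.) Finally, since $X$ is very general, we may assume it is the generic fiber of such a family (the locus of hypersurfaces arising as $(g^p-\pi^pf=0)$ for general $f,g$ is dense), so the conclusion holds for very general $X$.

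The main obstacle is bookkeeping around the non-ruledness and the precise statement of \cite[Cor.~C]{ChenStapleton-finite-BirX}: I must check that for \emph{general} $f,g$ the cover $Z_0$ is normal with only mild (e.g.\ canonical) singularities, is not ruled, is the unique non-(separably uni-)ruled component of the central fiber $Z_0$ of $Z_R$ (here $Z_0$ is irreducible, so uniqueness is free, but I must confirm $Z_R$ is integral and $Z_0$ appears with multiplicity one), and that the numerical inequalities place $Z_0$ in the range where \cite[Cor.~C]{ChenStapleton-finite-BirX} gives the desired bound on torsion in $\Bir$. The parity hypothesis ``$n$ even if $p=2$'' enters precisely here: for $p=2$ the cover is a double cover and odd-dimensional double covers in characteristic $2$ have problematic singularities (see the Remark after Theorem~\ref{thm:torsion-p^e}), so the input result is only available for $n$ even. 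A secondary point to verify is that $\Bir(X_{\overline\eta})$ and $\Bir(X_\eta)$ agree after base change to $\overline K$ for the purposes of ``very general'' — but this is standard since $X$ is defined over a finitely generated field and very generality is preserved.
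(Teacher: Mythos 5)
Your overall strategy---specialize $\Bir$ along Mori's degeneration and invoke Proposition~\ref{prop:specialization-order-l}\eqref{item:mu_l-nontrivial-on-special-fiber} to rule out torsion in the kernel---matches the second half of the paper's argument, but there is a genuine gap at the key input. You apply \cite[Cor.~C]{ChenStapleton-finite-BirX} directly to the special fiber $Z_0$ of Mori's construction taken over $R=\CC[[\pi]]$, i.e.\ to a $p$-cyclic cover in \emph{characteristic zero} (you even write ``with $k=\CC$''). But that result concerns $p$-cyclic covers in characteristic $p$, where the cover degenerates to a $\mu_p$-quotient/inseparable situation; there is no a priori reason a complex $p$-cyclic cover of a hypersurface has trivial (or only $p$-power-torsion) birational automorphism group, and nothing in your write-up supplies one. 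The paper's proof is a \emph{two-step} degeneration: first a mixed-characteristic $(0,p)$ family degenerating the complex $p$-cyclic cover to one over $\overline{\mathbb F}_p$ (using \cite[Thm.~C]{ChenStapleton-rational-endomorphisms} for sustained separably uniruled modifications, \cite[Lem.~7]{Koll'ar-hypersurfaces} for non-separable-uniruledness of the special fiber, and \cite[Cor.~C]{ChenStapleton-finite-BirX} for triviality of its $\Bir$), which via Proposition~\ref{prop:specialization-order-l}\eqref{item:mu_l-nontrivial-on-special-fiber} shows the very general \emph{complex} cover has only $p$-power torsion in $\Bir$; and only then the equicharacteristic-zero Mori degeneration you describe. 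The first step is exactly where the ``$p$-power'' loophole in the conclusion arises: in mixed characteristic $(0,p)$ the order $\ell$ must be invertible in $R$, so $p$-torsion cannot be excluded. Your proposal never performs this step, so the central claim about $\Bir_{\overline k}(Z_{0,\overline k})$ is unsupported.

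A smaller issue: to conclude for a very general hypersurface you should cite the identification of the geometric generic fiber with a very general member of the family (\cite[Lem.~2.1]{Vial13}), rather than a density statement about the locus $(g^p-\pi^pf=0)$; the hypersurfaces appearing in Mori's family are not themselves dense in moduli, so the transfer to ``very general'' really does go through the generic fiber.
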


\begin{proof}
The inequalities in the statement of the proposition imply that over \(\overline{\mathbb F}_p\), a general \(p\)-cyclic cover of a degree \(e\) hypersurface in \(\mathbb P^{n+1}\) has trivial birational automorphism group by \cite[Cor.~C]{ChenStapleton-finite-BirX} and is not separably uniruled by \cite[Lem.~7]{Koll'ar-hypersurfaces}. So it follows from \cite[Thm.~C]{ChenStapleton-rational-endomorphisms}, Proposition~\ref{prop:specialization-hom}\eqref{item:specialization-sustained}, and Proposition~\ref{prop:specialization-order-l}\eqref{item:mu_l-nontrivial-on-special-fiber} that for a very general such \(p\)-cyclic cover \(Z\) over $\CC$, $\Bir_{\mathbb C}(Z)$ only contains elements whose orders are \(p\)-powers. By Construction~\ref{construction:mori}, there is a family of degree \(pe\) hypersurfaces over a complex curve that degenerates to a general such \(p\)-cyclic cover. Since $Z$ is not ruled \cite[Prop.~5.12]{Koll'ar-rational-curves} and the total space has sustained ruled modifications \cite[Ex.~1.7]{ChenStapleton-rational-endomorphisms}, we may apply Proposition~\ref{prop:specialization-hom}\eqref{item:specialization-sustained}. Together with Proposition~\ref{prop:specialization-order-l}\eqref{item:mu_l-nontrivial-on-special-fiber} and the isomorphism between the geometric generic and very general fibers of the family \cite[Lem. 2.1]{Vial13}, this gives the result for a very general degree \(pe\) hypersurface over $\CC$.
\end{proof}

\begin{proof}[Proof of Theorem~\ref{thm:torsion-p^e}]
Let \(e\coloneqq \lceil \frac{n+3}{p+1}\rceil\). We will first show the result for \(d=pe\). By the comment after Theorem~\ref{thm:torsion-p^e}, we may assume that $d \leq n$ (note that this implies \(n\geq 3p\)). The assumptions in the theorem then imply that \((p-1)e\leq n-e\leq pe-3\), so by Proposition~\ref{prop:BirtorsX-p} any torsion element in the birational automorphism group of a very general hypersurface of degree \(pe\) in \(\mathbb P^{n+1}_{\mathbb C}\) has order a power of $p$.

For \(d>pe\) we prove the result by induction, showing that the degree $d-1$ result implies the degree $d$ result. To start, consider a pencil of hypersurfaces spanned by a smooth degree $d$ hypersurface and a degree $d-1$ hypersurface union with a hyperplane. Assume that the union of all three is an snc divisor. Then the total space of the pencil is singular (as the dimension of the hypersurfaces is $\ge 3$) and admits a small resolution by blowing up the hyperplane in the central fiber. After this blowup, the localization of the family at the reducible fiber has reduced snc central fiber with two components birational to the original ones. Thus the localized family has sustained ruled modifications by \cite[Ex.~1.7]{ChenStapleton-rational-endomorphisms}.

By induction the only finite order birational automorphisms of a very general degree $d-1$ hypersurface have order a power of $p$. Moreover, it is not ruled by \cite[Thm.~2]{Koll'ar-hypersurfaces}, so we may apply Proposition~\ref{prop:specialization-hom}\eqref{item:specialization-sustained} to the above degeneration to prove the result in degree $d$.
\end{proof}





\begin{proof}[Proof of Corollary~\ref{cor:no-torsion}]
Combine the results for the primes \(p=2,3\) in Theorem~\ref{thm:torsion-p^e} if \(n\) is even, and consider the primes \(p=3,5\) if \(n\) is odd.
\end{proof}

\bibliographystyle{siam}
\bibliography{Biblio.bib}

\footnotesize{
\textsc{Department of Mathematics, Harvard University, Cambridge, Massachusetts 02138} \\
\indent \textit{E-mail address:} \href{mailto:nathanchen@math.harvard.edu}{nathanchen@math.harvard.edu}

\textsc{Department of Mathematics, University of Michigan, Ann Arbor, Michigan 48109} \\
\indent \textit{E-mail address:} \href{mailto:lenaji.math@gmail.com}{lenaji.math@gmail.com}

\textsc{Department of Mathematics, University of Michigan, Ann Arbor, Michigan 48109} \\
\indent \textit{E-mail address:} \href{mailto:dajost@umich.edu}{dajost@umich.edu}
}

\end{document}